
%
\documentclass[runningheads]{llncs}
\usepackage{graphicx}
%
\usepackage{longtable}
\usepackage{wrapfig}
\usepackage{ucs}
\usepackage{dsfont}
\usepackage{mathrsfs}
\usepackage{mathtools}
\usepackage{amsmath}
\usepackage{amsfonts}
\usepackage{amssymb}
\usepackage{soul}
\usepackage[makeroom]{cancel}
\usepackage{bbm}
\usepackage[english]{babel}
\usepackage{ucs}
\usepackage[colorlinks=true,linkcolor=blue,citecolor=blue,urlcolor=orange]{hyperref}

\usepackage{longtable}
\usepackage{wasysym}
\usepackage{verbatim}
\usepackage{multirow}
\usepackage{bussproofs}
\usepackage{setspace}
\usepackage{graphicx}
\usepackage{stmaryrd}
\usepackage{forest}
\forestset{smullyan tableaux/.style={for tree={math content},where n children=1{!1.before computing xy={l=\baselineskip},!1.no edge}{},closed/.style={label=below:$\times$},},}
\usepackage{longtable}
\usepackage[all]{xy}
\usepackage{wrapfig}
\usepackage{xcolor}
\usepackage[colorinlistoftodos,prependcaption,textsize=small]{todonotes}
\usepackage[most]{tcolorbox}
\usepackage{enumitem}
\usepackage{tikz}
\usetikzlibrary{arrows,calc,patterns,positioning,shapes}
\usetikzlibrary{decorations.pathmorphing}
\tikzset{
modal/.style={>=stealth',shorten >=1pt,shorten <=1pt,auto,
node distance=1.5cm,semithick},
world/.style={circle,draw,minimum size=1cm},
point/.style={circle,draw,fill=black,inner sep=0.5mm},
reflexive/.style={->,in=120,out=60,loop,looseness=#1},
reflexive/.default={5},
reflexive point/.style={->,in=135,out=45,loop,looseness=#1},
reflexive point/.default={25},
}
\newcommand{\commentSabine}[1]{}

\newcommand{\BD}{\mathsf{BD}}
\newcommand{\fourLukProb}{\four\Prob^{\Luk_\triangle}}
\newcommand{\HfourLukProb}{\mathcal{H}\fourLukProb}
\newcommand{\LfourLukProb}{\mathcal{L}_{\fourLukProb}}
\newcommand{\LBD}{\mathcal{L}_\mathsf{BD}}
\newcommand{\Luk}{{\mathchoice{\mbox{\rm\L}}{\mbox{\rm\L}}{\mbox{\rm\scriptsize\L}}{\mbox{\rm\tiny\L}}}}
\newcommand{\LLuk}{\mathcal{L}_{\Luk}}
\newcommand{\LLuksquare}{\mathcal{L}_{\Luk^2_\triangle}}
\newcommand{\LLukProbsquare}{\mathcal{L}_{\Prob^{\Luk^2}_\triangle}}
\newcommand{\LukProbsquare}{\Prob^{\Luk^2}_\triangle}
\newcommand{\Prop}{\mathtt{Prop}}

\newcommand{\np}{\mathsf{NP}}
\newcommand{\conp}{\mathsf{coNP}}
\newcommand{\four}{\mathbf{4}}

\newcommand{\true}{\mathbf{T}}
\newcommand{\both}{\mathbf{B}}
\newcommand{\neither}{\mathbf{N}}
\newcommand{\false}{\mathbf{F}}

\newcommand{\purebel}{\mathsf{b}}
\newcommand{\puredisbel}{\mathsf{d}}
\newcommand{\confl}{\mathsf{c}}
\newcommand{\uncert}{\mathsf{u}}
\newcommand{\purebelmod}{\mathsf{Bl}}
\newcommand{\puredisbelmod}{\mathsf{Db}}
\newcommand{\conflmod}{\mathsf{Cf}}
\newcommand{\uncertmod}{\mathsf{Uc}}

\newcommand{\Prob}{\mathsf{Pr}}
\newcommand{\Lit}{\mathsf{Lit}}
\newcommand{\Sf}{\mathtt{Sf}}

\newtheorem{convention}{Convention}

\usepackage{comment}

\begin{document}

\setlength{\jot}{0pt} 
\setlength{\abovedisplayskip}{2pt}
\setlength{\belowdisplayskip}{2pt}
\setlength{\abovedisplayshortskip}{1pt}
\setlength{\belowdisplayshortskip}{1pt}

\title{Two-layered logics for paraconsistent probabilities\thanks{The research of Marta B\'ilkov\'a was supported by the grant 22-01137S of the Czech Science Foundation. The research of Sabine Frittella and Daniil Kozhemiachenko was funded by the grant ANR JCJC 2019, project PRELAP (ANR-19-CE48-0006). This research is part of the MOSAIC project financed by the European Union's Marie Sk\l{}odowska-Curie grant No.~101007627.}}
\titlerunning{Paraconsistent non-standard modalities}
\author{Marta B\'ilkov\'a\inst{1}\orcidID{0000-0002-3490-2083} \and Sabine Frittella\inst{2}\orcidID{0000-0003-4736-8614}\and Daniil Kozhemiachenko\inst{2}\orcidID{0000-0002-1533-8034} \and \\Ondrej Majer\inst{3}\orcidID{0000-0002-7243-1622}}
\authorrunning{B\'ilkov\'a et al.}
\institute{The Czech Academy of Sciences, Institute of Computer Science, Prague\\
\email{bilkova@cs.cas.cz}
\and
INSA Centre Val de Loire, Univ.\ Orl\'{e}ans, LIFO EA 4022, France\\
\email{sabine.frittella@insa-cvl.fr, daniil.kozhemiachenko@insa-cvl.fr}
\and
The Czech Academy of Sciences, Institute of Philosophy, Prague\\\email{majer@flu.cas.cz}}
\maketitle              
\begin{abstract}
We discuss two two-layered logics formalising reasoning with paraconsistent probabilities that combine the \L{}ukasiewicz $[0,1]$-valued logic with Baaz $\triangle$ operator and the Belnap--Dunn logic. The first logic $\LukProbsquare$ (introduced in~\cite{BilkovaFrittellaKozhemiachenkoMajerNazari2022arxiv}) formalises a ‘two-valued’ approach where each event $\phi$ has independent positive and negative measures that stand for, respectively, the likelihoods of $\phi$ and $\neg\phi$. The second logic $\fourLukProb$ that we introduce here corresponds to ‘four-valued’ probabilities. There, $\phi$ is equipped with four measures standing for pure belief, pure disbelief, conflict and uncertainty of an agent in $\phi$.

We construct faithful embeddings of $\fourLukProb$ and $\LukProbsquare$ into one another and axiomatise $\fourLukProb$ using a Hilber-style calculus. We also establish the decidability of both logics and provide complexity evaluations for them using an expansion of the constraint tableaux calculus for $\Luk$.
\keywords{two-layered logics \and \L{}ukasiewicz logic \and non-standard probabilities \and paraconsistent logics \and constraint tableaux}
\end{abstract}
\section{Introduction\label{sec:introduction}}
Classical probability theory studies probability measures: maps from a probability space to $[0,1]$ that satisfy the (finite or countable) additivity condition:
\begin{align*}
\mu\left(\bigcup\limits_{i\in I}E_i\right)&=\sum\limits_{i\in I}\mu(E_i)\tag{$\forall i,j\in I:i\neq j\Rightarrow E_i\cap E_j=\varnothing$}
\end{align*}
Above, the disjointness of $E_i$ and $E_j$ can be construed as their incompatibility. Most importantly, if a~propositional formula $\phi$ is associated with an event (and interpreted as a statement about it), then $\phi$ and $\neg\phi$ are incompatible and $\phi\vee\neg\phi$ exhausts the entire sample space.

\emph{Paraconsistent} probability theory, on the other hand, assumes that the probability measure of an event represents not the likelihood of it happening but an agent's certainty therein which they infer from the information given by the sources. As a~\emph{single} source can give incomplete or contradictory information, it is reasonable to assume that a ‘contradictory’ event $\phi\wedge\neg\phi$ can have a positive probability and that $\phi\vee\neg\phi$ does not necessarily have probability $1$.

Thus, a~logic describing events should allow them to be both true and false (if the source gives contradictory information) or neither true nor false (when the source does not give information). Formally, this means that $\neg$ does not correspond to the complement in the sample space.

\vspace{.5em}

\textbf{Paraconsistent probabilities in $\BD$}
The simplest logic to represent reasoning about information provided by sources is the Belnap--Dunn logic~\cite{Dunn1976,Belnap2019,Belnap1977fourvalued}. Originally, $\BD$ was presented as a four-valued propositional logic in the $\{\neg,\wedge,\vee\}$ language. The values represent the different accounts a source can give regarding a~statement~$\phi$:
\begin{itemize}[noitemsep,topsep=2pt]
\item $\mathbf{T}$ stands for ‘the source only says that $\phi$ is true’;
\item $\mathbf{F}$ stands for ‘the source only says that $\phi$ is false’;
\item $\mathbf{B}$ stands for ‘the source says both that $\phi$ is false and that $\phi$ is true’;
\item $\mathbf{N}$ stands for ‘the source does not say that $\phi$ is false nor that it is true’.
\end{itemize}
The interpretation of the truth values allows for a reformulation of $\BD$ semantics in terms of \emph{two classical but independent valuations}. Namely,
\begin{center}
\begin{tabular}{c|c|c}
&\textbf{is true when}&\textbf{is false when}\\\hline
$\neg\phi$&$\phi$ is false&$\phi$ is true\\
$\phi_1\wedge\phi_2$&$\phi_1$ and $\phi_2$ are true&$\phi_1$ is false or $\phi_2$ is false\\
$\phi_1\vee\phi_2$&$\phi_1$ is true or $\phi_2$ is true&$\phi_1$ and $\phi_2$ are false
\end{tabular}
\end{center}
It is easy to see that there are no universally true nor universally false formulas in $\BD$. Thus, $\BD$ satisfies the desiderata outlined above.

The first representation of paraconsistent probabilities in terms of $\BD$ was given in~\cite{Dunn2010}, however, no axiomatisation was provided. Dunn proposes to divide the sample space into four exhaustive and mutually exclusive parts depending on the Belnapian value of $\phi$. An alternative approach was proposed in~\cite{KleinMajerRad2021}. There, the authors propose two equivalent interpretations based on the two formulations of semantics. The first option is to give $\phi$ two \emph{independent probability measures}: the one determining the likelihood of $\phi$ to be true and the other the likelihood of $\phi$ to be false. The second option follows Dunn and also divides the sample space according to whether $\phi$ has value $\true$, $\both$, $\neither$, or $\false$ in a given state.

The main difference between these two approaches is that in~\cite{Dunn2010}, the probability of $\phi\wedge\phi'$ is entirely determined by those of $\phi$ and $\phi'$ which makes it compositional. On the other hand, the paraconsistent probabilities proposed in~\cite{KleinMajerRad2021} are not compositional w.r.t.\ conjunction. In this paper, we choose the latter approach since it can be argued~\cite{Dubois2008} that belief is not compositional.

A similar approach to paraconsistent probabilities can be found in, e.g.~\cite{Bueno-SolerCarnielli2016,RodriguesBueno-SolerCarnielli2021}. There, probabilities are defined over an extension of $\BD$ with classicality and non-classicality operators. It is worth mentioning that the proposed axioms of probability are very close to those from~\cite{KleinMajerRad2021}: e.g., both allow measures $\mathtt{p}$ s.t.\ $\mathtt{p}(\phi)+\mathtt{p}(\neg\phi)<1$ (if the information regarding $\phi$ is incomplete) or $\mathtt{p}(\phi)+\mathtt{p}(\neg\phi)>1$ (when the information is contradictory).

\vspace{.5em}

\textbf{Two-layered logics for uncertainty}
Reasoning about uncertainty can be formalised via modal logics where the modality is interpreted as a measure of an event. The concrete semantics of the modality can be defined in two ways. First, using a modal language with Kripke semantics where the measure is defined on the set of states as done in, e.g.,~\cite{Gardenfors1975,DelgrandeRenne2015,DelgrandeRenneSack2019} for qualitative probabilities and in~\cite{Ilic-StepicKnezevicOgnjanovic2022} for the quantitative ones. Second, employing a two-layered formalism (cf.~\cite{FaginHalpernMegiddo1990,FaginHalpern1991ComputationalIntelligence}, \cite{BaldiCintulaNoguera2020}, and~\cite{BilkovaFrittellaKozhemiachenkoMajerNazari2022arxiv,BilkovaFrittellaKozhemiachenkoMajer2023IJAR} for examples). There, the logic is split into two levels: the inner layer describes events, and the outer layer describes the reasoning with the measure defined on events. The measure is a \emph{non-nesting} modality $\mathtt{M}$, and the outer-layer formulas are built from ‘modal atoms’ of the form $\mathtt{M}\phi$ with $\phi$ being an inner-layer formula. The outer-layer formulas are then equipped with the semantics of a~fuzzy logic that permits necessary operations (e.g., \L{}ukasiewicz for the quantitative reasoning and G\"{o}del for the qualitative).

In this work, we choose the two-layered approach. First, it is more modular than the usual Kripke semantics: as long as the logic of the event description is chosen, we can define different measures on top of it using different upper-layer logics. Second, the completeness proof is very simple since one only needs to translate the axioms of the given measure into the outer-layer logic. Finally, even though, the traditional Kripke semantics is more expressive than two-layered logics, this expressivity is not really necessary in many contexts. Indeed, people rarely say something like ‘it is probable that it is probable that $\phi$’. Moreover, it is considerably more difficult to motivate the assignment of truth values in the nesting case, in particular, when one and the same measure is applied both to a~propositional and modalised formula as in, e.g., $\mathtt{M}(p\wedge\mathtt{M}q)$.

\vspace{.5em}

\textbf{Plan of the paper}
Our paper continues the project proposed in~\cite{BilkovaFrittellaMajerNazari2020} and continued in~\cite{BilkovaFrittellaKozhemiachenkoMajerNazari2022arxiv} and~\cite{BilkovaFrittellaKozhemiachenkoMajer2023IJAR}. Here, we set to provide a logic that formalises the reasoning with four-valued probabilities as presented in~\cite{KleinMajerRad2021}. The rest of the text is organised as follows. In Section~\ref{sec:probabilities}, we recall two approaches to probabilities over $\BD$ from~\cite{KleinMajerRad2021}. In Section~\ref{sec:logics}, we provide the semantics of our two-layered logics and in Section~\ref{sec:axioms}, we axiomatise them using Hilbert-style calculi. In Section~\ref{sec:complexity}, we prove that all our logics are decidable and establish their complexity evaluations. Finally, we wrap up our results in Section~\ref{sec:conclusion}.
\section{Two approaches to paraconsistent probabilities\label{sec:probabilities}}
We begin with defining the semantics of $\BD$ on sets of states.
The language of $\BD$ is given by the following grammar (with $\Prop$ being a countable set of propositional variables).
\[\LBD\ni\phi\coloneqq p\in\Prop\mid\neg\phi\mid(\phi\wedge\phi)\mid(\phi\vee\phi)\]
\begin{convention}
In what follows, we will write $\Prop(\phi)$ to denote the set of variables occurring in $\phi$ and $\Lit(\phi)$ to denote the set of \emph{literals} (i.e., variables or their negations) occurring in $\phi$. Moreover, we use $\Sf(\phi)$ to stand for the set of all subformulas of $\phi$.

We are also going to use two kinds of formulas: the single- and the two-layered ones. To make the differentiation between them simpler, we use Greek letters from the end of the alphabet ($\phi$, $\chi$, $\psi$, etc.) to designate the first kind and the letters from the beginning of the alphabet ($\alpha$, $\beta$, $\gamma$, \ldots) for the second kind.

Furthermore, we use $v$ (with indices) to stand for the valuations of single-layered formulas and $e$ (with indices) for the two-layered formulas.
\end{convention}
\begin{definition}[Set semantics of $\BD$]\label{def:BDframesemantics}
Let $\phi,\phi'\in\LBD$, $W\neq\varnothing$, and $v^+,v^-:\Prop\to2^W$. For a~model $\mathfrak{M}=\langle W,v^+,v^-\rangle$, we define notions of $w\vDash^+\phi$ and $w\vDash^-\phi$ for $w\in W$ as follows.
\begin{align*}
w\vDash^+p&\text{ iff }w\in v^+(p)&w\vDash^-p&\text{ iff }w\in v^-(p)\\
w\vDash^+\neg\phi&\text{ iff }w\vDash^-\phi&w\vDash^-\neg\phi&\text{ iff }w\vDash^+\phi\\
w\vDash^+\phi\wedge\phi'&\text{ iff }w\vDash^+\phi\text{ and }w\vDash^+\phi'&w\vDash^-\phi\wedge\phi'&\text{ iff }w\vDash^-\phi\text{ or }w\vDash^-\phi'\\
w\vDash^+\phi\vee\phi'&\text{ iff }w\vDash^+\phi\text{ or }w\vDash^+\phi'&w\vDash^-\phi\vee\phi'&\text{ iff }w\vDash^-\phi\text{ and }w\vDash^-\phi'
\end{align*}
We denote the positive and negative extensions of a~formula as follows:
\begin{align*}
|\phi|^+\coloneqq\{w\in W\mid w\vDash^+\phi\}&&|\phi|^-\coloneqq\{w\in W\mid w\vDash^-\phi\}.
\end{align*}
We say that a~sequent $\phi\vdash\chi$ is \emph{valid on $\mathfrak{M}=\langle W,v^+,v^-\rangle$} (denoted, $\mathfrak{M}\models[\phi\vdash\chi]$) iff $|\phi|^+\subseteq|\chi|^+$ and $|\chi|^-\subseteq|\phi|^-$. A sequent $\phi\vdash\chi$ is \emph{$\BD$-valid} ($\phi\!\models_\BD\!\chi$) iff it is valid on every model. In this case, we will say that $\phi$ \emph{entails}~$\chi$.
\end{definition}

Now, we can use the above semantics to define probabilities on the models. We adapt the definitions from~\cite{KleinMajerRad2021}.
\begin{definition}[$\BD$ models with $\pm$-probabilities]\label{def:measuredmodel}
A \emph{$\BD$ model with a $\pm$-probability} is a tuple $\mathfrak{M}_\mu=\langle\mathfrak{M},\mu\rangle$ with $\mathfrak{M}$ being a $\BD$ model and $\mu:2^W\rightarrow[0,1]$ satisfying:
\begin{description}[noitemsep,topsep=2pt]
\item[$\mathbf{mon}$:] if $X\subseteq Y$, then $\mu(X)\leq\mu(Y)$;
\item[$\mathbf{neg}$:] $\mu(|\phi|^-)=\mu(|\neg\phi|^+)$;
\item[$\mathbf{ex}$:] $\mu(|\phi\vee\chi|^+)=\mu(|\phi|^+)+\mu(|\chi|^+)-\mu(|\phi\wedge\chi|^+)$.
\end{description}
\end{definition}
To facilitate the presentation of the four-valued probabilities defined over $\BD$ models, we introduce additional extensions of $\phi$ defined via $|\phi|^+$ and $|\phi|^-$.
\begin{convention}
Let $\mathfrak{M}=\langle W,v^+,v^-\rangle$ be a $\BD$ model, $\phi\in\LBD$. We set
\begin{align*}
|\phi|^\purebel=&|\phi|^+\setminus|\phi|^- & |\phi|^\puredisbel=&|\phi|^-\setminus|\phi|^+\\
|\phi|^\confl=&|\phi|^+\cap|\phi|^-& |\phi|^\uncert=&W\setminus(|\phi|^+\cup|\phi|^-)
\end{align*}
We call these extensions, respectively, \emph{pure belief}, \emph{pure disbelief}, \emph{conflict}, and \emph{uncertainty in $\phi$}, following~\cite{KleinMajerRad2021}.
\end{convention}
\begin{definition}[$\BD$ models with $\four$-probabilities]\label{def:4model}
A \emph{$\BD$ model with a $\four$-probability} is a tuple $\mathfrak{M}_\four=\langle\mathfrak{M},\mu_\four\rangle$ with $\mathfrak{M}$ being a $\BD$ model and $\mu_\four:2^W\rightarrow[0,1]$ satisfying:
\begin{description}[noitemsep,topsep=2pt]
\item[$\mathbf{part}$:] $\mu_\four(|\phi|^\purebel)+\mu_\four(|\phi|^\puredisbel)+\mu_\four(|\phi|^\uncert)+\mu_\four(|\phi|^\confl)=1$;
\item[$\mathbf{neg}$:] $\mu_\four(|\neg\phi|^\purebel)=\mu_\four(|\phi|^\puredisbel)$, $\mu_\four(|\neg\phi|^\confl)=\mu_\four(|\phi|^\confl)$;
\item[$\mathbf{contr}$:] $\mu_\four(|\phi\wedge\neg\phi|^\purebel)=0$, $\mu_\four(|\phi\wedge\neg\phi|^\confl)=\mu_\four(|\phi|^\confl)$;
\item[$\mathbf{BCmon}$:] if $\mathfrak{M}\models[\phi\vdash\chi]$, then $\mu_\four(|\phi|^\purebel)+\mu_\four(|\phi|^\confl)\leq\mu_\four(|\psi|^\purebel)+\mu_\four(|\psi|^\confl)$;
\item[$\mathbf{BCex}$:] $\mu_\four(|\phi|^\purebel)+\mu_\four(|\phi|^\confl)+\mu_\four(|\psi|^\purebel)+\mu_\four(|\psi|^\confl)=\mu_\four(|\phi\wedge\psi|^\purebel)+\mu_\four(|\phi\wedge\psi|^\confl)+\mu_\four(|\phi\vee\psi|^\purebel)+\mu_\four(|\phi\vee\psi|^\confl)$.
\end{description}
\end{definition}
\begin{convention}
We will further utilise the following naming convention:
\begin{itemize}[noitemsep,topsep=2pt]
\item we use the term \emph{‘$\pm$-probability’} to stand for $\mu$ from Definition~\ref{def:measuredmodel};
\item we call $\mu_\four$ from Defintion~\ref{def:4model} a~\emph{‘$\four$-probability’} or a~\emph{‘four-valued probability’}.
\end{itemize}
Recall that $\pm$-probabilities are referred to as ‘non-standard’ in~\cite{KleinMajerRad2021} and~\cite{BilkovaFrittellaKozhemiachenkoMajerNazari2022arxiv}. As this term is too broad (four-valued probabilities are not ‘standard’ either), we use a different designation.
\end{convention}

Let us quickly discuss the measures defined above. First, observe that $\mu(|\phi|^+)$ and $\mu(|\phi|^-)$ are independent from one another. Thus, $\mu$ gives two measures to each $\phi$, as desired. Second, recall~\cite[Theorems~2--3]{KleinMajerRad2021} that every $\four$-probability on a $\BD$ model induces a $\pm$-probability and vice versa. In the following sections, we will define two-layered logics for $\BD$ models with $\pm$- and $\four$-probabilities and show that they can be faithfully embedded into each other.

\begin{remark}\label{rem:classicality}
Note, that for every $\BD$ model with a $\pm$-pro\-ba\-bi\-li\-ty $\langle W,v^+,v^-,\mu\rangle$ (resp., $\BD$ model with $\four$-probability $\langle W,v^+,v^-,\mu_\four\rangle$), there exist a $\BD$ model $\langle W',v'^+,v'^-,\pi\rangle$ with a~\emph{classical} probability measure $\pi$ s.t.\ $\pi(|\phi|^+)=\mu(|\phi|^+)$ (resp., $\pi(|\phi|^\mathsf{x})=\mu_\four(|\phi|^\mathsf{x})$ for $\mathsf{x}\in\{\purebel,\puredisbel,\confl,\uncert\}$)~\cite[Theorems~4--5]{KleinMajerRad2021}. Thus, we can further assume w.l.o.g.\ that $\mu$ and $\mu_\four$ are \emph{classical probability measures} on $W$.
\end{remark}
\section{Logics for paraconsistent probabilities\label{sec:logics}}
In this section, we provide logics that are (weakly) complete w.r.t.\ $\BD$ models with $\pm$- and $\four$-probabilities. Since conditions on measures contain arithmetic operations on $[0,1]$, we choose an expansion of \L{}ukasiewicz logic, namely, \L{}ukasiewicz logic with~$\triangle$ ($\Luk_\triangle$), for the outer layer. Furthermore, $\pm$-probabilities work with both positive and negative extensions of formulas, whence it seems reasonable to use $\Luk^2$ --- a paraconsistent expansion of $\Luk$ (cf.~\cite{BilkovaFrittellaMajerNazari2020,BilkovaFrittellaKozhemiachenko2021} for details) with two valuations --- $v_1$ (support of truth) and $v_2$ (support of falsity) --- on $[0,1]$. This was done in~\cite{BilkovaFrittellaKozhemiachenkoMajerNazari2022arxiv} --- the resulting logic $\LukProbsquare$ was proven to be complete w.r.t.\ $\BD$ models with $\pm$-probabilities.

We begin by recalling the language and standard semantics of \L{}ukasiewicz logic with~$\triangle$ and its paraconsistent expansion $\Luk^2_\triangle$.
\begin{definition}\label{def:Lukalgebra}
The standard $\Luk_\triangle$-algebra is a tuple $\langle[0,1],{\sim_\Luk},\triangle_\Luk,\wedge_\Luk,\vee_\Luk,\rightarrow_\Luk,\odot_\Luk,\oplus_\Luk,\ominus_\Luk\rangle$ with the operations are defined as follows.
\begin{align*}
{\sim_\Luk}a&\coloneqq1-a&\triangle_\Luk a&\coloneqq\begin{cases}1&\text{if }a=1\\0&\text{otherwise}\end{cases}
\end{align*}
\begin{align*}
a\!\wedge_\Luk\!b&\coloneqq\min(a,b)&a\!\vee_\Luk\!b&\coloneqq\max(a,b)&a\!\rightarrow_\Luk\!b&\coloneqq\min(1,1\!-\!a\!+\!b)\\
a\!\odot_\Luk\!b&\coloneqq\max(0,a\!+\!b\!-\!1)&a\!\oplus_\Luk\!b&\coloneqq\min(1,a\!+\!b)&a\!\ominus_\Luk\!b&\coloneqq\max(0,a\!-\!b)
\end{align*}
\end{definition}
\begin{definition}[\L{}ukasiewicz logic with $\triangle$]\label{def:Lukasiewicz}
The language of $\Luk_\triangle$ is given via the following grammar
\[\LLuk\!\ni\!\phi\coloneqq p\!\in\!\Prop\mid{\sim}\phi\mid\triangle\phi\mid(\phi\wedge\phi)\mid(\phi\vee\phi)\mid(\phi\rightarrow\phi)\mid(\phi\odot\phi)\mid(\phi\oplus\phi)\mid(\phi\ominus\phi)\]
We will also write $\phi\leftrightarrow\chi$ as a shorthand for $(\phi\rightarrow\chi)\odot(\chi\rightarrow\phi)$.

A valuation is a map $v\!:\!\Prop\!\rightarrow\![0,1]$ that is extended to the complex formulas as expected: $v(\phi\!\circ\!\chi)\!=\!v(\phi)\!\circ_\Luk\!v(\chi)$. $\phi$ is \emph{$\Luk_\triangle$-valid} iff $v(\phi)=1$ for every~$v$.
\end{definition}
\begin{remark}\label{rem:smalllanguage}
Note that $\triangle$, ${\sim}$, and $\rightarrow$ can be used to define all other connectives as follows.
\begin{align*}
\phi\vee\chi&\coloneqq(\phi\rightarrow\chi)\rightarrow\chi&\phi\wedge\chi&\coloneqq{\sim}({\sim}\phi\vee{\sim}\chi)&\phi\oplus\chi&\coloneqq{\sim}\phi\rightarrow\chi\\
\phi\odot\chi&\coloneqq{\sim}(\phi\rightarrow{\sim}\chi)&\phi\ominus\chi&\coloneqq\phi\odot{\sim}\chi
\end{align*}
\end{remark}
\begin{definition}[$\Luk^2_\triangle$]\label{def:Luk2triangle}
The language is constructed using the following grammar.
\[\LLuksquare\ni\phi\coloneqq p\in\Prop\mid\neg\phi\mid{\sim}\phi\mid\triangle\phi\mid(\phi\rightarrow\phi)\]
The semantics is given by \emph{two} valuations $v_1$ (support of truth) and $v_2$ (support of falsity) $v_1,v_2:\Prop\rightarrow[0,1]$ that are extended as follows.
\begin{align*}
v_1(\neg\phi)&=v_2(\phi)&v_2(\neg\phi)&=v_1(\phi)\\
v_1({\sim}\phi)&={\sim_\Luk}v_1(\phi)&v_2({\sim}\phi)&={\sim_\Luk}v_2(\phi)\\
v_1(\triangle\phi)&=\triangle_\Luk v_1(\phi)&v_2(\triangle\phi)&={\sim_\Luk}\triangle_\Luk{\sim}_\Luk v_2(\phi)\\
v_1(\phi\rightarrow\chi)&=v_1(\phi)\rightarrow_\Luk v_1(\chi)&v_2(\phi\rightarrow\chi)&=v_2(\chi)\ominus_\Luk v_2(\phi)
\end{align*}
We say that $\phi$ is \emph{$\Luk^2_\triangle$-valid} iff for every $v_1$ and $v_2$, it holds that $v_1(\phi)=1$ and $v_2(\phi)=0$.
\end{definition}
\begin{remark}
Again, the remaining connectives can be defined as in Remark~\ref{rem:smalllanguage}. Furthermore, when there is no risk of confusion, we write $v(\phi)=(x,y)$ to designate that $v_1(\phi)=x$ and $v_2(\phi)=y$.
\end{remark}
We are now ready to present the two-layered logics. We begin with $\LukProbsquare$ from~\cite{BilkovaFrittellaKozhemiachenkoMajerNazari2022arxiv}.
\begin{definition}[$\LukProbsquare$: language and semantics]\label{def:PrLuk2}
The language of $\LukProbsquare$ is given by the following grammar
\begin{align*}
\LLukProbsquare\ni\alpha&\coloneqq\Prob\phi\mid{\sim}\alpha\mid\neg\alpha\mid\triangle\alpha\mid(\alpha\rightarrow\alpha)\tag{$\phi\in\LBD$}
\end{align*}
A~$\LukProbsquare$ model is a tuple $\mathbb{M}=\langle\mathfrak{M},\mu,e_1,e_2\rangle$ with $\langle\mathfrak{M},\mu\rangle$ being a $\BD$ model with $\pm$-probability and $e_1,e_2:\LLukProbsquare\rightarrow[0,1]$ s.t.\ $e_1(\Prob\phi)=\mu(|\phi|^+)$, $e_2(\Prob\phi)=\mu(|\phi|^-)$, and the values of complex formulas being computed following Definition~\ref{def:Luk2triangle}. We say that $\alpha$ is \emph{$\LukProbsquare$ valid} iff $e(\alpha)=(1,0)$ in every model.
\end{definition}

\begin{definition}[$\fourLukProb$: language and semantics]\label{def:4ProbLuk}
The language of $\fourLukProb$ is constructed by the following grammar:
\begin{align*}
\LfourLukProb\ni\alpha&\coloneqq\purebelmod\phi\mid\puredisbelmod\phi\mid\conflmod\phi\mid\uncertmod\phi\mid{\sim}\alpha\mid\triangle\alpha\mid(\alpha\rightarrow\alpha)\tag{$\phi\in\LBD$}
\end{align*}
A $\fourLukProb$ model is a tuple $\mathbb{M}=\langle\mathfrak{M},\mu_\four,e\rangle$ with $\langle\mathfrak{M},\mu_\four\rangle$ being a $\BD$ model with $\four$-probability s.t.\ $e(\purebelmod\phi)\!=\!\mu_\four(|\phi|^\purebel)$, $e(\puredisbelmod\phi)\!=\!\mu_\four(|\phi|^\puredisbel)$, $e(\conflmod\phi)\!=\!\mu_\four(|\phi|^\confl)$, $e(\uncertmod\phi)\!=\!\mu_\four(|\phi|^\uncert)$, and the values of complex formulas computed via Definition~\ref{def:Lukasiewicz}. We say that $\alpha$ is \emph{$\fourLukProb$ valid} iff $e(\alpha)=1$ in every model. A set of formulas $\Gamma$ \emph{entails} $\alpha$ ($\Gamma\models_{\fourLukProb}\alpha$) iff there is no $\mathbb{M}$ s.t.\ $e(\gamma)=1$ for every $\gamma\in\Gamma$ but $e(\alpha)\neq1$.
\end{definition}
\begin{convention}
We will further call formulas of the form $\mathsf{X}\phi$ ($\phi\in\LBD$, $\mathsf{X}\in\{\Prob,\purebelmod,\puredisbelmod,\conflmod,\uncertmod\}$) \emph{modal atoms}. We interpret the value of a modal atom as a degree of certainty that the agent has in $\phi$. For example, $e(\Prob p)=(\frac{3}{4},\frac{1}{2})$ means that the agent's certainty in $p$ is $\frac{3}{4}$ and in $\neg p$ is $\frac{1}{2}$. Similarly, $e(\conflmod q)=\frac{1}{3}$ is construed as ‘the agent is conflicted w.r.t.\ $q$ to the degree $\frac{1}{3}$’.
\end{convention}
To make the semantics clearer, we provide the following example.
\begin{example}\label{ex:semantics}
Consider the following $\BD$ model.
\[\xymatrix{w_0:p^\pm,\xcancel{q}&&w_1:p^-,q^-}\]
And let $\mu=\mu_\four$ be defined as follows: $\mu(\{w_0\})=\frac{2}{3}$, $\mu(\{w_1\})=\frac{1}{3}$, $\mu(W)=1$, $\mu(\varnothing)=0$. It is easy to check that $\mu$ satisfies the conditions of Definitions~\ref{def:measuredmodel} and~\ref{def:4model}. Now let $e$ be the \emph{$\Luk^2_\triangle$ valuation} and $e_\four$ the \emph{$\Luk_\triangle$ valuation} induced by $\mu$ and $\mu_\four$, respectively.

Consider two $\BD$ formulas: $p\!\vee\!q$ and $p$. We have $e(\Prob(p\!\vee\!q))\!=\!\left(\frac{2}{3},\frac{1}{3}\right)$ and $e(\Prob p)=\left(\frac{2}{3},1\right)$. In $\fourLukProb$, we have $e_\four(\purebelmod(p\vee q))=\frac{2}{3}$, $e_\four(\puredisbelmod(p\vee q))=\frac{1}{3}$, $e_\four(\conflmod p)=\frac{2}{3}$, $e_\four(\conflmod(p\vee q)),e_\four(\uncertmod(p\vee q))=0$, $e_\mathbf{4}(\purebelmod p),e(\uncertmod p)=0$, $e_\four(\conflmod p)=\frac{2}{3}$, and $e(\puredisbelmod p)=\frac{1}{3}$.
\end{example}
The following property of $\LukProbsquare$ is going to be useful further in the section.
\begin{lemma}\label{lemma:conflation}
Let $\alpha\in\LLukProbsquare$. Then, $\alpha$ is $\LukProbsquare$ valid iff $e_1(\alpha)=1$ in every $\LukProbsquare$ model.
\end{lemma}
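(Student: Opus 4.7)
The easy direction is immediate: if $\alpha$ is $\LukProbsquare$ valid, then $e(\alpha)=(1,0)$ in every model, so in particular $e_1(\alpha)=1$. The plan for the converse is to associate to every $\LukProbsquare$ model $\mathbb{M}=\langle W,v^+,v^-,\mu\rangle$ a \emph{dual} model $\mathbb{M}^\ast$ on the same $W$ and with the same measure $\mu$, in which the roles of $e_1$ and $1-e_2$ are interchanged; applying the hypothesis to $\mathbb{M}^\ast$ will then force $e_2(\alpha)_{\mathbb{M}}=0$, and combined with the hypothesis at $\mathbb{M}$ itself this yields validity.

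Concretely, I set $v^{\ast +}(p)\coloneqq W\setminus v^-(p)$ and $v^{\ast -}(p)\coloneqq W\setminus v^+(p)$. Since $\mu$ is unchanged (and may be taken classical by Remark~\ref{rem:classicality}), the conditions $\mathbf{mon}$, $\mathbf{neg}$, $\mathbf{ex}$ of Definition~\ref{def:measuredmodel} remain satisfied, so $\mathbb{M}^\ast$ is indeed a $\LukProbsquare$ model. A straightforward induction on $\phi\in\LBD$ then shows
\[
|\phi|^+_{\mathbb{M}^\ast}=W\setminus|\phi|^-_{\mathbb{M}},\qquad |\phi|^-_{\mathbb{M}^\ast}=W\setminus|\phi|^+_{\mathbb{M}},
\]
so that on modal atoms $e_1(\Prob\phi)_{\mathbb{M}^\ast}=1-e_2(\Prob\phi)_{\mathbb{M}}$ and $e_2(\Prob\phi)_{\mathbb{M}^\ast}=1-e_1(\Prob\phi)_{\mathbb{M}}$.

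The core step is to lift this identity from modal atoms to the whole language: by a simultaneous induction on $\alpha\in\LLukProbsquare$ I verify $e_1(\alpha)_{\mathbb{M}^\ast}=1-e_2(\alpha)_{\mathbb{M}}$ and $e_2(\alpha)_{\mathbb{M}^\ast}=1-e_1(\alpha)_{\mathbb{M}}$. The cases ${\sim}$ and $\neg$ are immediate from Definition~\ref{def:Luk2triangle}; the $\triangle$ case uses that the asymmetric clause $v_2(\triangle\beta)={\sim}_\Luk\triangle_\Luk{\sim}_\Luk v_2(\beta)$ is precisely what is needed to make $1-v_2(\triangle\beta)=\triangle_\Luk(1-v_2(\beta))$; and the $\rightarrow$ case reduces to the $[0,1]$-identity $a\rightarrow_\Luk b=1-(b\ominus_\Luk a)$. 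The main subtlety is checking that the asymmetric treatment of $v_2$ at $\triangle$ and $\rightarrow$ is exactly compensated by the complementation used in defining $\mathbb{M}^\ast$, but each inductive step is a one-line arithmetic check. Once the duality is established, applying the hypothesis to $\mathbb{M}^\ast$ gives $e_1(\alpha)_{\mathbb{M}^\ast}=1$, hence $e_2(\alpha)_{\mathbb{M}}=0$, completing the proof.
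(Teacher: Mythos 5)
Your proposal is correct and follows essentially the same route as the paper: the same dual model $\mathbb{M}^\ast$ obtained by complementing $v^+$ and $v^-$ (your set-theoretic definition coincides with the paper's four-case table), the same extension identities, and the same conclusion $e^\ast(\alpha)=(1-e_2(\alpha),1-e_1(\alpha))$. The only cosmetic difference is that you lift the duality to all of $\LLukProbsquare$ by a direct induction on $\alpha$, whereas the paper packages the same inductive steps as validities stating that $\neg{\sim}$ commutes with the connectives.
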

\begin{proof}
Let $\mathbb{M}=\langle W,v^+,v^-,\mu,e_1,e_2\rangle$ be a $\LukProbsquare$ model s.t.\ $e_2(\alpha)\neq0$. We construct a model $\mathbb{M}^*=\langle W,(v^*)^+,(v^*)^-,\mu,e^*_1,e^*_2\rangle$ where $e^*_1(\alpha)\neq1$. To do this, we define new $\BD$ valuations $(v^*)^+$ and $(v^*)^-$ on $W$ as follows.
\begin{align*}
w\in v^+(p),w\notin v^-(p)&\text{ then }w\in(v^*)^+(p),w\notin(v^*)^-(p)\\
w\in v^+(p),v^-(p)&\text{ then }w\notin(v^*)^+(p),(v^*)^-(p)\\
w\notin v^+(p),v^-(p)&\text{ then }w\in(v^*)^+(p),(v^*)^-(p)\\
 w\notin v^+(p),w\in v^-(p)&\text{ then }w\notin(v^*)^+(p),w\in(v^*)^-(p)
\end{align*}
It can be easily checked by induction on $\phi\in\LBD$ that
\begin{align*}
|\phi|^+_\mathbb{M}&=W\setminus|\phi|^-_{\mathbb{M}^*}&|\phi|^-_\mathbb{M}&=W\setminus|\phi|^+_{\mathbb{M}^*}
\end{align*}
Now, since we can w.l.o.g.\ assume that $\mu$ is a~(classical) probability measure on~$W$ (recall Remark~\ref{rem:classicality}), we have that
\begin{align*}
e^*(\Prob\phi)=(1-\mu(|\phi|^-),1-\mu(|\phi|^+))=(1-e_2(\Prob\phi),1-e_1(\Prob\phi))
\end{align*}
Observe that if $e(\alpha)=(x,y)$, then $e(\neg{\sim}\alpha)=(1-y,1-x)$. Furthermore, it is straightforward to verify that the following formulas are valid.
\begin{align*}
\neg{\sim}\neg\alpha&\leftrightarrow\neg\neg{\sim}\alpha&\neg{\sim\sim}\alpha&\leftrightarrow{\sim}\neg{\sim}\alpha\\
\neg{\sim}\triangle\alpha&\leftrightarrow\triangle\neg{\sim}\alpha&\neg{\sim}(\alpha\!\rightarrow\!\alpha')&\leftrightarrow\neg{\sim}\alpha\!\rightarrow\!\neg{\sim}\alpha'
\end{align*}
Hence, $e^*(\alpha)=(1-e_2(\alpha),1-e_1(\alpha))$ for every $\alpha\in\LLukProbsquare$. The result follows.
\end{proof}

At first glance, $\fourLukProb$ gives a more fine-grained view on a $\BD$ model than $\LukProbsquare$ since it can evaluate each extension of a given $\phi\in\LBD$, while $\LukProbsquare$ always considers $|\phi|^+$ and $|\phi|^-$ together. In the remainder of the section, we show that the two logics have, in fact, the same expressivity.

One can see from Definition~\ref{def:PrLuk2} that $\neg\Prob\phi\leftrightarrow\Prob\neg\phi$. Furthermore, $\Luk^2$ admits $\neg$ negation normal forms and is a conservative extension of $\Luk$~\cite{BilkovaFrittellaMajerNazari2020,BilkovaFrittellaKozhemiachenko2021}. Thus, it is possible to push all $\neg$'s occurring in $\alpha\in\LLukProbsquare$ to modal atoms. We will use this fact to establish the embeddings of $\LukProbsquare$ and $\fourLukProb$ into one another.
\begin{definition}\label{def:positiveNNFs}
Let $\alpha\in\LLukProbsquare$. $\alpha^\neg$ is produced from $\alpha$ by successively applying the following transformations.
\begin{align*}
\neg\Prob\phi&\rightsquigarrow\Prob\neg\phi&\neg\neg\alpha&\rightsquigarrow\alpha&\neg{\sim}\alpha&\rightsquigarrow{\sim}\neg\alpha\\\neg(\alpha\rightarrow\alpha')&\rightsquigarrow{\sim}(\neg\alpha'\rightarrow\neg\alpha)&\neg\triangle\alpha&\rightsquigarrow{\sim}\triangle{\sim}\neg\alpha
\end{align*}
\end{definition}

It is easy to check that $e(\alpha)=e(\alpha^\neg)$ in every $\LukProbsquare$ model.
\begin{definition}\label{def:embeddings}
Let $\alpha\in\LLukProbsquare$ be $\neg$-free, we define $\alpha^\four\in\LfourLukProb$ as follows.
\begin{align*}
(\Prob\phi)^\four&=\purebelmod\phi\oplus\conflmod\phi\\
(\heartsuit\alpha)^\four&=\heartsuit\alpha^\four\tag{$\heartsuit\in\{\triangle,{\sim}\}$}\\
(\alpha\rightarrow\alpha')^\four&=\alpha^\four\rightarrow\alpha'^\four
\end{align*}

Let $\beta\in\LfourLukProb$. We define $\beta^\pm$ as follows.
\begin{align*}
(\purebelmod\phi)^\pm&=\Prob\phi\ominus\Prob(\phi\wedge\neg\phi)\\
(\conflmod\phi)^\pm&=\Prob(\phi\wedge\neg\phi)\\
(\uncertmod\phi)^\pm&={\sim}\Prob(\phi\vee\neg\phi)\\
(\puredisbelmod\phi)^\pm&=\Prob\neg\phi\ominus\Prob(\phi\wedge\neg\phi)\\
(\heartsuit\beta)^\pm&=\heartsuit\beta^\pm\tag{$\heartsuit\in\{\triangle,{\sim}\}$}\\
(\beta\rightarrow\beta')^\pm&=\beta^\pm\rightarrow\beta'^\pm
\end{align*}
\end{definition}
\begin{theorem}\label{theorem:embeddings1}
$\alpha\in\LLukProbsquare$ is $\LukProbsquare$ valid iff $(\alpha^\neg)^\four$ is $\fourLukProb$ valid.
\end{theorem}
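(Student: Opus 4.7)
The plan is to chain three easy reductions: $\alpha \leftrightarrow \alpha^\neg$, then the "first-coordinate suffices" lemma, then a semantic correspondence between $\neg$-free $\LukProbsquare$ formulas and their $(\cdot)^\four$-translations.

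First I would observe that the rewrite rules of Definition~\ref{def:positiveNNFs} preserve $\Luk^2_\triangle$-semantics in every $\LukProbsquare$ model: $\neg\Prob\phi \rightsquigarrow \Prob\neg\phi$ is sound because $v_1(\neg\Prob\phi) = v_2(\Prob\phi) = \mu(|\phi|^-) = \mu(|\neg\phi|^+) = v_1(\Prob\neg\phi)$ (and dually for $v_2$), and the other four rules are standard $\Luk^2_\triangle$ equivalences. Hence $e(\alpha) = e(\alpha^\neg)$ in every $\LukProbsquare$ model, and combining this with Lemma~\ref{lemma:conflation} we reduce the claim to:
\begin{center}
$e_1(\alpha^\neg) = 1$ in every $\LukProbsquare$ model $\iff$ $(\alpha^\neg)^\four$ is $\fourLukProb$-valid.
\end{center}

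The core step is a lemma: \emph{for every $\neg$-free $\beta \in \LLukProbsquare$ and every $\BD$ model $\mathfrak{M} = \langle W, v^+, v^-\rangle$ equipped with a classical probability measure $\pi$ on $W$, we have $e_1(\beta) = e_\four(\beta^\four)$}, where $e_1$ is computed in the $\LukProbsquare$ model whose $\pm$-probability is $\mu(X) = \pi(X)$ and $e_\four$ is computed in the $\fourLukProb$ model whose $\four$-probability is $\mu_\four(X) = \pi(X)$. By Remark~\ref{rem:classicality}, restricting to such classical $\pi$ loses no generality on either side. The base case is the key calculation: since $|\phi|^+ = |\phi|^\purebel \sqcup |\phi|^\confl$ as a disjoint union, additivity of $\pi$ gives
\begin{align*}
e_1(\Prob\phi) = \mu(|\phi|^+) = \pi(|\phi|^\purebel) + \pi(|\phi|^\confl) = \mu_\four(|\phi|^\purebel) + \mu_\four(|\phi|^\confl),
\end{align*}
and since this sum is at most $\pi(W) = 1$, the $\Luk$-truncation in $\oplus_\Luk$ is vacuous, so $e_\four(\purebelmod\phi \oplus \conflmod\phi)$ coincides with it. The inductive step is immediate because $v_1$ in $\Luk^2_\triangle$ acts on ${\sim}$, $\triangle$, $\rightarrow$ exactly as the corresponding $\Luk_\triangle$ operations act (see Definition~\ref{def:Luk2triangle}), and $(\cdot)^\four$ commutes with those same connectives.

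From this lemma both directions of the theorem follow by the standard "transport of models" argument. For ($\Rightarrow$), any $\fourLukProb$ model provides, via Remark~\ref{rem:classicality}, a classical $\pi$ on some $W$ with the given $\BD$ structure; this $\pi$ also defines a $\LukProbsquare$ model in which validity of $\alpha$ gives $e_1(\alpha^\neg)=1$, hence $e_\four((\alpha^\neg)^\four)=1$ by the lemma. For ($\Leftarrow$), start from an arbitrary $\LukProbsquare$ model, pass to the classical $\pi$-presentation, use the lemma in the opposite direction to get $e_1(\alpha^\neg)=1$, and conclude via Lemma~\ref{lemma:conflation}.

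The main obstacle, in my view, is not any single step but keeping the bookkeeping clean: one must verify that $\alpha^\neg$ really is $\neg$-free (a simple induction on the rewriting system of Definition~\ref{def:positiveNNFs}, noting that each rule strictly decreases a suitable $\neg$-rank) so that the translation $(\cdot)^\four$ applies, and one must justify invoking Remark~\ref{rem:classicality} uniformly so that the same classical measure $\pi$ simultaneously witnesses the $\pm$- and $\four$-probability sides. Once these administrative points are settled, the rest is the one-line base case above plus a routine induction.
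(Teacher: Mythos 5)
Your proposal is correct and follows essentially the same route as the paper: reduce to $\alpha^\neg$ via the semantics-preserving rewriting, invoke Lemma~\ref{lemma:conflation} to work only with the first coordinate, and then prove by induction (with the same base-case calculation using $|\phi|^+=|\phi|^\purebel\cup|\phi|^\confl$, disjointness, and the fact that the sum is at most $1$ so $\oplus_\Luk$ does not truncate) that $e_1(\beta)=e_\four(\beta^\four)$ over a shared classical measure. The only difference is presentational — you isolate the coordinate-matching claim as an explicit lemma and spell out both transport directions, which the paper leaves implicit.
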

\begin{proof}
Let w.l.o.g.\ $\mathbb{M}=\langle W,v^+,v^-,\mu,e_1,e_2\rangle$ be a $\BD$ model with $\pm$-probability where $\mu$ is a \emph{classical} probability measure and let $e(\alpha)=(x,y)$. We show that in the $\BD$ model $\mathbb{M}_\four=\langle W,v^+,v^-,\mu,e_1\rangle$ with \emph{four-probability} $\mu$, $e_1((\alpha^\neg)^\four)=x$. This is sufficient to prove the result. Indeed, by Lemma~\ref{lemma:conflation}, it suffices to verify that $e_1(\alpha)=1$ for every $e_1$, to establish the validity of $\alpha\in\LLukProbsquare$. By Lemma~\ref{lemma:conflation}, this is sufficient to prove the result since there to verify the validity of $\alpha\in\LLukProbsquare$, it suffices to verify whether $e_1(\alpha)=1$ for every $e_1$.

We proceed by induction on $\alpha^\neg$ (recall that $\alpha\leftrightarrow\alpha^\neg$ is $\LukProbsquare$ valid). If $\alpha=\Prob\phi$, then $e_1(\Prob\phi)=\mu(|\phi|^+)=\mu(|\phi|^\purebel\cup|\phi|^\confl)$. But $|\phi|^\purebel$ and $|\phi|^\confl$ are disjoint, whence $\mu(|\phi|^\purebel\cup|\phi|^\confl)=\mu(|\phi|^\purebel)+\mu(|\phi|^\confl)$, and since $\mu(|\phi|^\purebel)+\mu(|\phi|^\confl)\leq1$, we have that $e_1(\purebelmod\phi\oplus\conflmod\phi)=\mu(|\phi|^\purebel)+\mu(|\phi|^\confl)=e_1(\Prob\phi)$, as required.

The induction steps are straightforward since the semantic conditions of support of truth in $\Luk^2_\triangle$ coincide with the semantics of $\Luk_\triangle$ (cf.~Definitions~\ref{def:Luk2triangle} and~\ref{def:Lukasiewicz}).
\end{proof}
\begin{theorem}\label{theorem:embeddings2}
$\beta\in\LfourLukProb$ is $\LfourLukProb$ valid iff $\beta^\pm$ is $\LukProbsquare$ valid.
\end{theorem}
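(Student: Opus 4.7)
The plan is to mirror the argument of Theorem~\ref{theorem:embeddings1} in the opposite direction. By Remark~\ref{rem:classicality}, I may assume w.l.o.g.\ that both the $\pm$-probability $\mu$ and the $\four$-probability $\mu_\four$ arise from the \emph{same} underlying classical probability measure $\pi$ on $W$. Given a $\fourLukProb$ model $\mathbb{M}_\four=\langle W,v^+,v^-,\pi,e\rangle$ we form the companion $\LukProbsquare$ model $\mathbb{M}=\langle W,v^+,v^-,\pi,e_1,e_2\rangle$ on the same $\BD$ frame with $e_1(\Prob\phi)=\pi(|\phi|^+)$ and $e_2(\Prob\phi)=\pi(|\phi|^-)$; the converse passage is identical. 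By Lemma~\ref{lemma:conflation}, $\LukProbsquare$ validity of $\beta^\pm$ reduces to $e_1(\beta^\pm)=1$ in every $\LukProbsquare$ model, so it suffices to establish $e(\beta)=e_1(\beta^\pm)$ by induction on~$\beta$.

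For the base cases, I would exploit the fact that $W$ is partitioned by $|\phi|^\purebel,|\phi|^\confl,|\phi|^\puredisbel,|\phi|^\uncert$, together with the identities
\begin{align*}
|\phi|^+&=|\phi|^\purebel\cup|\phi|^\confl, & |\phi|^-&=|\phi|^\puredisbel\cup|\phi|^\confl,\\
|\phi\wedge\neg\phi|^+&=|\phi|^\confl, & |\phi\vee\neg\phi|^+&=|\phi|^\purebel\cup|\phi|^\confl\cup|\phi|^\puredisbel,
\end{align*}
which follow directly from the $\BD$ clauses of Definition~\ref{def:BDframesemantics}. Using additivity of $\pi$ (and the fact that $|\neg\phi|^+=|\phi|^-$), one checks, e.g., $e_1((\purebelmod\phi)^\pm)=\pi(|\phi|^+)\ominus_\Luk\pi(|\phi|^\confl)=\pi(|\phi|^\purebel)=e(\purebelmod\phi)$, and similarly $e_1((\conflmod\phi)^\pm)=\pi(|\phi|^\confl)$, $e_1((\puredisbelmod\phi)^\pm)=\pi(|\phi|^\puredisbel)$, and $e_1((\uncertmod\phi)^\pm)=1-\pi(|\phi|^+\cup|\phi|^-)=\pi(|\phi|^\uncert)$.

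The induction step is then immediate because the support-of-truth clauses of $\Luk^2_\triangle$ for ${\sim},\triangle$ and $\rightarrow$ (Definition~\ref{def:Luk2triangle}) coincide exactly with the corresponding $\Luk_\triangle$ clauses (Definition~\ref{def:Lukasiewicz}), and $(\cdot)^\pm$ is defined homomorphically over these connectives. For the converse direction one runs the same correspondence backwards: a $\LukProbsquare$ model falsifying $\beta^\pm$ yields, via the $\four$-probability induced by $\mu$ (\cite[Theorems~2--3]{KleinMajerRad2021}), a $\fourLukProb$ model falsifying $\beta$.

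The main subtlety I anticipate lies in the base cases for $\purebelmod$ and $\puredisbelmod$: since $\ominus_\Luk a\,b=\max(0,a-b)$, one has to argue that the subtraction does not clip to $0$. This is precisely where the assumption that $\pi$ is a genuine classical measure (Remark~\ref{rem:classicality}) and the inclusions $|\phi|^\confl\subseteq|\phi|^+$ and $|\phi|^\confl\subseteq|\phi|^-$ are essential, guaranteeing $\pi(|\phi|^+)\geq\pi(|\phi|^\confl)$ and $\pi(|\phi|^-)\geq\pi(|\phi|^\confl)$ so that the $\ominus$ evaluates to the desired pure belief/disbelief mass.
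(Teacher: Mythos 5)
Your proposal is correct and follows essentially the same route as the paper's proof: reduce $\LukProbsquare$ validity to support of truth via Lemma~\ref{lemma:conflation}, reuse the same frame and (classical) measure for the companion model, and induct on $\beta$ using the partition identities $|\phi\wedge\neg\phi|^+=|\phi|^\confl$, $|\phi|^+=|\phi|^\purebel\cup|\phi|^\confl$, etc., with the induction step following from the coincidence of the $\Luk^2_\triangle$ support-of-truth clauses with the $\Luk_\triangle$ semantics. Your explicit remark that $\ominus_\Luk$ cannot clip to $0$ because $|\phi|^\confl\subseteq|\phi|^+$ and $|\phi|^\confl\subseteq|\phi|^-$ is a detail the paper leaves implicit, but it is the same argument.
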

\begin{proof}
Assume w.l.o.g.\ that $\mathbb{M}=\langle W,v^+,v^-,\mu_\four,e\rangle$ is a $\BD$ model with a $\four$-probability where $\mu_\four$ is a classical probability measure and $e(\beta)=x$. We define a $\BD$ model with $\pm$-probability $\mathbb{M}^\pm=\langle W,v^+,v^-,\mu_\four,e_1,e_2\rangle$ and show that $e_1(\beta^\pm)=x$. Again, it is sufficient for us by Lemma~\ref{lemma:conflation}.

We proceed by induction on $\beta$. If $\beta=\purebelmod\phi$, then $e(\purebelmod\phi)=\mu_\four(|\phi|^\purebel)$. Now observe that $\mu_\four(|\phi|^+)=\mu(|\phi|^\purebel\cup|\phi|^\confl)=\mu_\four(|\phi|^\purebel)+\mu_\four(|\phi|^\confl)$ since $|\phi|^\purebel$ and $|\phi|^\confl$ are disjoint. But $\mu_\four(|\phi|^+)\!=\!e_1(\Prob\phi)$ and $\mu_\four(|\phi|^\confl)\!=\!\mu_\four(|\phi\!\wedge\!\neg\phi|^+)$ since $|\phi\!\wedge\!\neg\phi|^+\!=\!|\phi|^\confl$. Thus, $\mu_\four(|\phi|^\purebel)=e_1(\Prob\phi\ominus\Prob(\phi\wedge\neg\phi))$ as required.

Other basis cases of $\conflmod\phi$, $\uncertmod\phi$, and $\puredisbelmod\phi$ can be tackled in a similar manner. The induction steps are straightforward since the support of truth in $\Luk^2_\triangle$ coincides with semantical conditions in $\Luk_\triangle$.
\end{proof}
\section{Hilbert-style axiomatisation of $\fourLukProb$\label{sec:axioms}}
Let us proceed to the axiomatisation of $\fourLukProb$. Since its outer layer expands $\Luk_\triangle$, we will need to encode the conditions on $\mu_\four$ therein. Furthermore, since $\Luk$~(and hence, $\Luk_\triangle$) is not compact~\cite[Remark~3.2.14]{Hajek1998}, our axiomatisation can only be \emph{weakly complete} (i.e., complete w.r.t.\ finite theories).

The axiomatisation will consist of two types of axioms: those that axiomatise~$\Luk_\triangle$ and modal axioms that encode the conditions from Definition~\ref{def:4model}. For the sake of brevity, we will compress the axiomatisation of $\Luk_\triangle$ into one axiom that allows us to use $\Luk_\triangle$ theorems\footnote{A Hilbert-style calculus for $\Luk$ can be found in, e.g.~\cite{MetcalfeOlivettiGabbay2008}, and the axioms for $\triangle$ in~\cite{Baaz1996}. A concise presentation of a Hilbert-style calculus for $\Luk_\triangle$ is also given in~\cite{BilkovaFrittellaKozhemiachenkoMajerNazari2022arxiv}.} without proof.
\begin{definition}[$\HfourLukProb$ --- Hilbert-style calculus for $\fourLukProb$]\label{def:HfourLukProb}
The calculus $\HfourLukProb$ consists of the following axioms and rules.
\begin{description}[topsep=2pt,itemsep=3pt]
\item[$\Luk_\triangle$:] $\Luk_\triangle$ valid formulas instantiated in $\LfourLukProb$.
\item[$\mathsf{equiv}$:] $\mathsf{X}\phi\!\leftrightarrow\!\mathsf{X}\chi$ for every $\phi,\chi\!\in\!\LBD$ s.t.\ $\phi\!\dashv\vdash\!\chi$ is $\BD$-valid and $\mathsf{X}\!\in\!\{\purebelmod,\puredisbelmod,\conflmod,\uncertmod\}$.
\item[$\mathsf{contr}$:] ${\sim}\purebelmod(\phi\wedge\neg\phi)$; $\conflmod\phi\leftrightarrow\conflmod(\phi\wedge\neg\phi)$.
\item[$\mathsf{neg}$:] $\purebelmod\neg\phi\leftrightarrow\puredisbelmod\phi$; $\conflmod\neg\phi\leftrightarrow\conflmod\phi$.
\item[$\mathsf{mon}$:] $(\purebelmod\phi\oplus\conflmod\phi)\rightarrow(\purebelmod\chi\oplus\conflmod\chi)$ for every $\phi,\chi\in\LBD$ s.t.\ $\phi\vdash\chi$ is $\BD$-valid.
\item[$\mathsf{part1}$:] $\purebelmod\phi\oplus\puredisbelmod\phi\oplus\conflmod\phi\oplus\uncertmod\phi$.
\item[$\mathsf{part2}$:] $((\mathsf{X}_1\phi\oplus\mathsf{X}_2\phi\oplus\mathsf{X}_3\phi\oplus\mathsf{X}_4\phi)\ominus\mathsf{X}_4\phi)\leftrightarrow(\mathsf{X}_1\phi\oplus\mathsf{X}_2\phi\oplus\mathsf{X}_3\phi)$ with $\mathsf{X}_i\neq\mathsf{X}_j$, $\mathsf{X}_i\in\{\purebelmod,\puredisbelmod,\conflmod,\uncertmod\}$.
\item[$\mathsf{ex}$:] 
$(\purebelmod(\phi\vee\chi)\oplus\conflmod(\phi\vee\chi))\leftrightarrow((\purebelmod\phi\oplus\conflmod\phi)\ominus(\purebelmod(\phi\wedge\chi)\oplus\conflmod(\phi\wedge\chi))\oplus(\purebelmod\chi\oplus\conflmod\chi))$.
\item[MP:] $\dfrac{\alpha\quad\alpha\rightarrow\alpha'}{\alpha'}$.
\item[$\triangle\mathsf{nec}$:] $\dfrac{\HfourLukProb\vdash\alpha}{\HfourLukProb\vdash\triangle\alpha}$.
\end{description}
\end{definition}

The axioms above are simple translations of properties from Definition~\ref{def:4model}. We split \textbf{part} in two axioms to ensure that the values of $\purebelmod\phi$, $\puredisbelmod\phi$, $\conflmod\phi$, and $\uncertmod\phi$ sum up exactly to $1$.
\begin{theorem}\label{theorem:completeness}
Let $\Xi\subseteq\LfourLukProb$ be finite. Then  $\Xi\models_{\fourLukProb}\alpha$ iff $\Xi\vdash_{\HfourLukProb}\alpha$.
\end{theorem}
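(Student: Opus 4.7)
The plan is to prove soundness and weak completeness separately. Soundness is a routine induction on derivations: MP preserves validity by the $\Luk_\triangle$-semantics of the outer layer, $\triangle$-necessitation preserves it because $\triangle_\Luk x=1$ iff $x=1$, and each modal axiom is a direct transcription into $\LfourLukProb$ of a clause from Definition~\ref{def:4model}. The only tiny care point is that $\mathsf{part1}$ together with $\mathsf{part2}$ expresses an \emph{exact} equation to $1$: since the four relevant measures sum to $1$ the truncations in $\oplus$ and $\ominus$ coincide with ordinary addition/subtraction on the values of interest, so $\mathsf{ex}$ and $\mathsf{part2}$ are valid with equality, not merely inequality.

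For completeness assume $\Xi\not\vdash_{\HfourLukProb}\alpha$ and build a countermodel. Let $\Sigma$ be the finite set of modal atoms $\mathsf{X}\psi$ with $\mathsf{X}\in\{\purebelmod,\puredisbelmod,\conflmod,\uncertmod\}$ and $\psi$ ranging over $\Sf(\Xi\cup\{\alpha\})$ extended under $\neg$ and under $\wedge\neg$ (so that the instances of $\mathsf{contr}$ etc.\ are expressible). Regarding each $\sigma\in\Sigma$ as a fresh propositional variable $p_\sigma$ turns $\HfourLukProb$ into a propositional theory: let $T$ collect all instances of the modal axioms restricted to $\Sigma$, which is finite. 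Then $\Xi\cup T\not\vdash_{\Luk_\triangle}\alpha$, so by the weak completeness of $\Luk_\triangle$ there is a $\Luk_\triangle$-valuation $v$ with $v[\Xi\cup T]\subseteq\{1\}$ and $v(\alpha)<1$.

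The essential step, and the main obstacle, is to realise $v{\restriction}\Sigma$ as the valuation induced by some $\BD$ model with a $\four$-probability. Let $\mathsf{Prop}_0=\Prop(\Xi\cup\{\alpha\})$ and take $W$ to be the finite set of all $\BD$-states on $\mathsf{Prop}_0$, which determines $v^+,v^-$ on $W$ and hence indicator vectors $\mathbf 1_{|\psi|^{\mathsf{X}}}\in\{0,1\}^W$ for every $\mathsf{X}\psi\in\Sigma$. One has to exhibit a probability distribution $\mu_\four$ on $W$ satisfying $\mu_\four(|\psi|^{\mathsf{X}})=v(\mathsf{X}\psi)$ for all $\mathsf{X}\psi\in\Sigma$. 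This is a finite linear feasibility problem; its solvability is exactly what the modal axioms in $T$ encode. More concretely, using $\mathsf{mon}$, $\mathsf{ex}$ and $\mathsf{equiv}$ the map $\phi\mapsto v(\purebelmod\phi)\oplus v(\conflmod\phi)$ is a monotone modular $[0,1]$-valued function on positive extensions, so by the $\BD$-analogue of the classical representation theorem (Theorem~2 of~\cite{KleinMajerRad2021}) it extends to a classical probability $\pi$ on $W$; the remaining axioms $\mathsf{part1}$, $\mathsf{part2}$, $\mathsf{contr}$, $\mathsf{neg}$ force the values of the other three modalities to match those determined by $\pi$ as the classical probability measure of $|\psi|^{\puredisbel}$, $|\psi|^{\confl}$, $|\psi|^{\uncert}$. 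Invoking Remark~\ref{rem:classicality} in the other direction converts $\pi$ into the desired $\four$-probability $\mu_\four$ on $\langle W,v^+,v^-\rangle$.

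Finally, I would check that the $\fourLukProb$-valuation $e$ induced by $\langle W,v^+,v^-,\mu_\four\rangle$ agrees with $v$ on $\Sigma$ by construction, and hence, by induction on outer-layer structure using the fact that the outer semantics is exactly $\Luk_\triangle$, that $e$ agrees with $v$ on every outer-layer formula whose modal atoms lie in $\Sigma$. In particular $e[\Xi]\subseteq\{1\}$ and $e(\alpha)<1$, so $\Xi\not\models_{\fourLukProb}\alpha$, as required. The central difficulty, to emphasise, is the representation step producing $\pi$ from a finite set of axiomatically consistent values; once that is in place the rest is bookkeeping.
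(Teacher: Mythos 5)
Your overall architecture matches the paper's: argue by contraposition, reduce $\HfourLukProb$-derivability to $\Luk_\triangle$-derivability from a finite set of modal-axiom instances read as propositional formulas, invoke weak completeness of $\Luk_\triangle$, and build the countermodel on the set of all $\BD$-states over $\Prop(\Xi\cup\{\alpha\})$. The soundness half is fine. The gap sits exactly at the step you yourself flag as the central difficulty, and your proposed resolution does not close it. You let $T$ contain only the axiom instances over the $\neg$- and $\wedge\neg$-closure of $\Sf(\Xi\cup\{\alpha\})$, and then assert that the solvability of the feasibility problem ``find $\mu_\four$ on $W$ with $\mu_\four(|\psi|^{\mathsf{X}})=v(\mathsf{X}\psi)$ for all $\mathsf{X}\psi\in\Sigma$'' is exactly what the axioms in $T$ encode. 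It is not: monotonicity and pairwise inclusion--exclusion on a \emph{partial} family of extensions are necessary but not sufficient for realisability by a measure. (The classical obstruction transposes directly via $\purebelmod\phi\oplus\conflmod\phi$: assigning $\frac{1}{2}$ to $p$, $q$, $r$ and $0$ to each pairwise conjunction satisfies every instance of $\mathsf{mon}$ and $\mathsf{ex}$ expressible over those six formulas, yet any additive measure would then give $|p\vee q\vee r|^+$ the value at least $\frac{3}{2}$.) Likewise, the representation theorems of~\cite{KleinMajerRad2021} that you invoke take as input a \emph{totally defined} $\pm$- or $\four$-probability on all formula extensions; they say nothing about extending a partial assignment defined on a handful of extensions, which is precisely the feasibility question left open. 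Note also that most instances of $\mathsf{ex}$ cannot even be stated over your $\Sigma$, since $\phi\vee\chi$ and $\phi\wedge\chi$ for distinct subformulas $\phi,\chi$ do not belong to the closure.

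The missing idea is the paper's: since $\BD$ is tabular, there are only finitely many pairwise non-equivalent $\LBD$-formulas over $\Prop(\Xi\cup\{\alpha\})$, so one enlarges $\Xi$ to a still-finite $\Xi^*$ containing \emph{every} modal-axiom instance over \emph{all} of these formulas before applying weak completeness of $\Luk_\triangle$. The resulting valuation $e$ then assigns a value to $\mathsf{X}\phi$ for every $\phi$ over the relevant variables, so $\mu_\four$ is pinned down on \emph{every} formula extension (well-definedness across equivalent formulas being secured by $\mathsf{equiv}$). At that point no representation theorem is needed: Definition~\ref{def:4model} constrains $\mu_\four$ only on formula extensions and does not demand additivity on $2^W$, so any map $2^W\to[0,1]$ extending the values read off from $e$ is a $\four$-probability, its defining conditions being verbatim the (now all present and satisfied) axiom instances. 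Your detour through an honest classical measure is the content of the remark following the theorem and becomes available once the assignment is total, but it is not what completeness requires.
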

\begin{proof}
Soundness can be established by the routine check of the axioms' validity. Thus, we prove completeness. We reason by contraposition. Assume that $\Xi\nvdash_{\HfourLukProb}\alpha$. Now, observe that $\HfourLukProb$ proofs are, actually, $\Luk_\triangle$ proofs with additional probabilistic axioms. 
Let $\Xi^*$ stand for $\Xi$ extended with probabilistic axioms built over all pairwise non-equivalent $\LBD$ formulas constructed from $\Prop[\Xi\cup\{\alpha\}]$. 
Clearly, $\Xi^*\nvdash_{\HfourLukProb}\alpha$ either. Moreover, $\Xi^*$ is finite as well since $\BD$ is tabular (and whence, there exist only finitely many pairwise non-equivalent $\LBD$ formulas over a finite set of variables). Now, by the weak completeness of $\Luk_\triangle$, there exists an $\Luk_\triangle$ valuation $e$ s.t.\ $e[\Xi^*]=1$ and $e(\alpha)\neq1$.

It remains to construct a $\fourLukProb$ model $\mathbb{M}$ falsifying $\Xi^*\models_{\fourLukProb}\alpha$ using~$e$. We proceed as follows. First, we set $W=2^{\Lit[\Xi^*\cup\{\alpha\}]}$, and for every $w\in W$ define $w\in v^+(p)$ iff $p\in w$ and $w\in v^-(p)$ iff $\neg p\in w$. We extend the valuations to $\phi\in\LBD$ in the usual manner. Then, for $\mathsf{X}\phi\in\Sf[\Xi^*\cup\{\alpha\}]$ we set $\mu_\four(|\phi|^\mathsf{x})=e(\mathsf{X}\phi)$ according to modality $\mathsf{X}$.

Observe now that any map from $2^W$ to $[0,1]$ that extends $\mu_\four$ is, in fact, a~$\four$-probability. Indeed, all requirements from Definition~\ref{def:4model} are satisfied as $\Xi^*$ contains all the necessary instances of probabilistic axioms and $e[\Xi^*]=1$.
\end{proof}
\begin{remark}
Observe that we could use a \emph{classical probability measure} in the proof of Theorem~\ref{theorem:completeness} because of~\cite[Theorem~5]{KleinMajerRad2021}.
\end{remark}
\section{Decidability and complexity\label{sec:complexity}}
In the completeness proof, we reduced $\HfourLukProb$ proofs to $\Luk_\triangle$ proofs. We know that validity and finitary entailment of $\Luk_\triangle$ are $\conp$-complete (since $\Luk$ is $\conp$-complete and $\triangle$ has truth-functional semantics).

Likewise, $\LukProbsquare$ proofs are also reducible to $\Luk^2$ proofs (cf.~\cite[Theorem~4.24]{BilkovaFrittellaKozhemiachenkoMajerNazari2022arxiv}) from substitution instances of axioms $\Prob\phi\rightarrow\Prob\chi$ (for $\phi\models_\BD\chi$), $\neg\Prob\phi\leftrightarrow\Prob\neg\phi$, and $\Prob(\phi\vee\chi)\leftrightarrow(\Prob\phi\ominus\Prob(\phi\wedge\chi))\oplus\Prob\chi$. Thus, it is clear that the validity and satisfiability of $\fourLukProb$ and $\LukProbsquare$ are $\conp$-hard and $\np$-hard, respectively.

In this section, we provide a simple decision procedure for $\LukProbsquare$ and $\fourLukProb$ and show that their satisfiability and validity are $\np$- and $\conp$-complete, respectively. Namely, we adapt constraint tableaux for $\Luk^2$ defined in~\cite{BilkovaFrittellaKozhemiachenko2021} and expand them with rules for $\triangle$. We then adapt the $\np$-completeness proof $\mathsf{FP}(\Luk)$ from~\cite{HajekTulipani2001} to establish our result.
\begin{definition}[Constraint tableaux for $\Luk^2_\triangle$ --- $\mathcal{T}\left(\Luk^2_\triangle\right)$]\label{def:L2triangleconstrainttableaux}
Branches contain \emph{labelled formulas} of the form $\phi\leqslant_1i$, $\phi\leqslant_2 i$, $\phi\geqslant_1i$, or $\phi\geqslant_2i$, and \emph{numerical constraints} of the form $i\leq j$ with $i,j \in [0,1]$.

Each branch can be extended by an application of one of the rules below.
\[\scriptsize{\begin{array}{cccc}
\neg\!\leqslant_1\!\dfrac{\neg\phi\leqslant_1i}{\phi\leqslant_2i}
&
\neg\!\leqslant_2\!\dfrac{\neg\phi\leqslant_2i}{\phi\leqslant_1i}
&
\neg\!\geqslant_1\!\dfrac{\neg\phi\geqslant_1i}{\phi\geqslant_2i}
&
\neg\!\geqslant_2\!\dfrac{\neg\phi\geqslant_2i}{\phi\geqslant_1i}\end{array}}\]
\[\scriptsize{\begin{array}{cccc}
{\sim}\!\leqslant_1\!\dfrac{{\sim}\phi\leqslant_1i}{\phi\geqslant_11-i}
&
{\sim}\!\leqslant_2\!\dfrac{{\sim}\phi\leqslant_2i}{\phi\geqslant_21-i}
&
{\sim}\!\geqslant_1\!\dfrac{{\sim}\phi\geqslant_1i}{\phi\leqslant_11-i}
&
{\sim}\!\geqslant_2\!\dfrac{{\sim}\phi\geqslant_2i}{\phi\leqslant_21-i}\end{array}}\]
\[\scriptsize{\begin{array}{cccc}
\triangle\!\leqslant_1\!\dfrac{\triangle\phi\geqslant_1i}{i\leq0\left|\begin{matrix}\phi\geqslant_1j\\j\geqslant1\end{matrix}\right.}
&
\triangle\!\geqslant_1\!\dfrac{\triangle\phi\leqslant_1i}{i\geq1\left|\begin{matrix}\phi\leqslant_1j\\j<1\end{matrix}\right.}
&
\triangle\!\leqslant_2\!\dfrac{\triangle\phi\leqslant_2i}{i\geq1\left|\begin{matrix}\phi\leqslant j\\j\leq0\end{matrix}\right.}
&
\triangle\!\geqslant_2\!\dfrac{\triangle\phi\geqslant_2i}{i\leq0\left|\begin{matrix}\phi\geqslant j\\j>0\end{matrix}\right.}\end{array}}\]
\[\scriptsize{\begin{array}{cccc}
\rightarrow\leqslant_1\dfrac{\phi_1\rightarrow\phi_2\leqslant_1i}{i\geq1\left|\begin{matrix}\phi_1\geqslant_11-i+j\\\phi_2\leqslant_1j\\j\leq i\end{matrix}\right.}
&
\rightarrow\leqslant_2\dfrac{\phi_1\rightarrow\phi_2\leqslant_2i}{\begin{matrix}\phi_1\geqslant_2j\\\phi_2\leqslant_2i+j\end{matrix}}
&
\rightarrow\geqslant_1\dfrac{\phi_1\rightarrow\phi_2\geqslant_1 i}{\begin{matrix}\phi_1\leqslant_11-i+j\\\phi_2\geqslant_1j\end{matrix}}
&
\rightarrow\geqslant_2\dfrac{\phi_1\rightarrow\phi_2\geqslant_2i}{i\leq0\left|\begin{matrix}\phi_1\leqslant_2j\\\phi_2\geqslant_2i+j\\j\leq 1-i\end{matrix}\right.}
\end{array}}\]
Let $i$'s be in $[0,1]$ and $x$'s be  variables ranging over the real interval $[0,1]$. We define the translation $\tau$ from labelled formulas to linear inequalities as follows:
$$\tau(\phi\!\leqslant_1\!i)=x_\phi^L\!\leq\!i; \; \tau(\phi\!\geqslant_1\!i)=x_\phi^L\!\geq\!i; \; \tau(\phi\!\leqslant_2\!i)=x_\phi^R\leq i; \; \tau(\phi\!\geqslant_2\!i)=x_\phi^R\!\geq\!i$$
Let $\bullet\in\{\leqslant_1,\geqslant_1\}$ and $\circ\in\{\leqslant_2,\geqslant_2\}$. A tableau branch
$$\mathcal{B}=\{\phi_1\circ i_1,\ldots,\phi_m\circ i_m,\phi'_1\bullet j_1,\ldots,\phi'_n\bullet j_n,k_1\leq l_1,\ldots,k_q\leq l_q\}$$
is \emph{closed} if the system of inequalities
\[\tau(\phi_1\circ i_1),\ldots,\tau(\phi_m\circ i_m),\tau(\phi'_1\bullet j_1),\ldots,\tau(\phi'_n\bullet j_n),k_1\leq l_1,\ldots,k_q\leq l_q\]
does not have solutions. Otherwise, $\mathcal{B}$ is \emph{open}. A tableau is \emph{closed} if all its branches are closed. $\phi$ has a \emph{$\mathcal{T}\left(\Luk^2_\triangle\right)$ proof} if the tableau beginning with $\{\phi\leqslant_1c, c<1\}$ is closed.
\end{definition}

Observe that the $\rightarrow$ and ${\sim}$ rules for $\leqslant_1$ coincide with the analoguous rules in the constraint tableaux for $\Luk$ as given in~\cite{Haehnle1992,Haehnle1994,Haehnle2001HBPL}. Thus, we can use the calculus both for $\fourLukProb$ and $\LukProbsquare$. Note also that we need to build only one tableau for $\LLukProbsquare$ formulas because of Lemma~\ref{lemma:conflation}.

The next statement can be proved in the same manner as~\cite[Theorem~1]{BilkovaFrittellaKozhemiachenko2021}.
\begin{theorem}[Completeness of tableaux]\label{theorem:tableauxcompleteness} ~
\begin{enumerate}[noitemsep,topsep=2pt]
\item $\phi$ is $\Luk_\triangle$ valid iff it has a $\mathcal{T}\left(\Luk^2_\triangle\right)$ proof.
\item $\phi$ is $\Luk^2_\triangle$ valid iff it has a $\mathcal{T}\left(\Luk^2_\triangle\right)$ proof.
\end{enumerate}
\end{theorem}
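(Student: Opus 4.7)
The plan is to prove both items of the theorem by a soundness / completeness argument that mirrors the one given for $\Luk^2$ in~\cite{BilkovaFrittellaKozhemiachenko2021}, with new case analyses only for the $\triangle$ rules and for the $\Luk_\triangle$-to-$\Luk^2_\triangle$ reduction. First I would prove soundness: for each rule, assume a valuation $(v_1,v_2)$ satisfies the labelled formula(s) above the bar and show it satisfies at least one branch below. The $\neg,{\sim},\rightarrow$ rules are verified directly from Definition~\ref{def:Luk2triangle}; for the $\triangle$ rules, I use that $\triangle_\Luk v_1(\phi)\in\{0,1\}$ and $v_2(\triangle\phi)={\sim_\Luk}\triangle_\Luk{\sim_\Luk}v_2(\phi)\in\{0,1\}$, so e.g.\ for $\triangle\!\leqslant_1$ with premise $\triangle\phi\geqslant_1 i$, if $i>0$ then $v_1(\triangle\phi)=1$ forces $v_1(\phi)=1$, witnessing the right branch with $j=1$. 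Soundness of proofs follows: starting from $\{\phi\leqslant_1 c,\,c<1\}$, closure means no witnessing valuation exists, so $v_1(\phi)=1$ for every $v_1$.

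For completeness I would argue by contraposition and build a countermodel from an open branch. Because every rule strictly decreases an obvious measure on labelled formulas (each premise formula is of strictly greater connective complexity than its conclusions, and the $\triangle$ rules introduce only fresh numerical variables $j$), saturation terminates in a finite tree. Assume the tableau starting from $\{\phi\leqslant_1 c,\,c<1\}$ does not close. Pick an open saturated branch $\mathcal{B}$; by openness the associated linear system $\tau(\mathcal{B})$ over $[0,1]$-valued variables $x_\psi^L,x_\psi^R$ (one pair per subformula $\psi$ of $\phi$) admits a solution $\sigma$. Define $v_1(p)\coloneqq\sigma(x_p^L)$ and $v_2(p)\coloneqq\sigma(x_p^R)$ on variables, extend to all formulas by Definition~\ref{def:Luk2triangle}, and show by induction on subformula complexity that $v_1(\psi)=\sigma(x_\psi^L)$ and $v_2(\psi)=\sigma(x_\psi^R)$ for every $\psi\in\Sf(\phi)$. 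The inductive steps for $\neg,{\sim},\rightarrow$ are the standard $\Luk$-case from~\cite{Haehnle2001HBPL} adapted as in~\cite{BilkovaFrittellaKozhemiachenko2021}; the new $\triangle$ steps are handled by noting that along a saturated branch exactly one disjunct of each $\triangle$-rule was explored, and in both cases the surviving constraints force $\sigma(x_{\triangle\psi}^L)=\triangle_\Luk\sigma(x_\psi^L)$ (respectively for $x^R$). Consequently $v_1(\phi)=\sigma(x_\phi^L)\leq c<1$, refuting $\Luk^2_\triangle$-validity of $\phi$.

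Item~1 follows from item~2 by a syntactic restriction argument: an $\LLuk$ formula contains no $\neg$, so a tableau rooted at $\phi\leqslant_1 c$ never produces any $\leqslant_2$ or $\geqslant_2$ labelled formula. The $\leqslant_1/\geqslant_1$ rules for ${\sim},\rightarrow,\triangle$ coincide with (an extension by $\triangle$ of) H\"ahnle's constraint tableau rules for $\Luk$. Thus every open branch yields a single $\Luk_\triangle$-valuation $v_1$ with $v_1(\phi)<1$, which is simultaneously a $\Luk^2_\triangle$-countermodel (by taking $v_2$ arbitrary) and a $\Luk_\triangle$-countermodel. Conversely, any $\Luk_\triangle$-countermodel gives an open branch of the restricted tableau.

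The principal obstacle I anticipate is the inductive step for $\triangle$ in the countermodel construction: because the $\triangle$ rules are branching, I must check that the constraints kept on the chosen branch are jointly \emph{exact} enough to pin the value $\sigma(x_{\triangle\psi}^L)$ to $\triangle_\Luk\sigma(x_\psi^L)$, rather than merely bounding it. For instance, for $\triangle\!\geqslant_1$ applied to $\triangle\psi\leqslant_1 i$ the left branch $i\geq 1$ only bounds $\sigma(x_{\triangle\psi}^L)\leq i$ from above; I must argue that in this branch $\sigma$ can be chosen (or adjusted without affecting closure) so that $\sigma(x_{\triangle\psi}^L)=1$ when $\sigma(x_\psi^L)=1$ and $=0$ otherwise, exploiting that no other rule constrains $x_{\triangle\psi}^L$ on a saturated branch. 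A secondary, minor point is verifying rationality/boundedness of the solution $\sigma$, which is standard since the linear program has rational coefficients and the clauses $0\leq x\leq 1$ are implicitly enforced by the labels introduced.
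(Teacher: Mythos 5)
Your overall strategy is exactly the one the paper relies on: it gives no detailed proof, remarking only that the theorem ``can be proved in the same manner as'' the completeness theorem for the $\Luk^2$ constraint tableaux of~\cite{BilkovaFrittellaKozhemiachenko2021}, i.e.\ rule-by-rule soundness plus extraction of a countermodel from an open saturated branch; your soundness cases for $\triangle$ and your treatment of item~1 as the $\leqslant_1$-fragment (H\"ahnle's rules plus $\triangle$) are in line with that.

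There is, however, one step that would fail as stated: the completeness invariant $v_1(\psi)=\sigma(x_\psi^L)$ for every $\psi\in\Sf(\phi)$. An open branch records only \emph{inequalities}, so a solution $\sigma$ may place $x_\psi^L$ anywhere in the admissible interval while the compositionally computed value $v_1(\psi)$ lands elsewhere in that interval. This already happens for $\rightarrow\geqslant_1$: the conclusions $\psi_1\leqslant_1 1-i+j$ and $\psi_2\geqslant_1 j$ force $v_1(\psi_1\rightarrow\psi_2)\geq i$, but nothing forces equality with $\sigma(x^L_{\psi_1\rightarrow\psi_2})$. So the problem is not specific to $\triangle$, and the ``adjust $\sigma$'' repair you sketch is both delicate (the same variable $x^L_{\triangle\psi}$ can be constrained by several labelled occurrences of $\triangle\psi$ on the branch, so it is not free) and unnecessary. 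The standard fix is to weaken the invariant to: the valuation built from $\sigma$'s values on atoms \emph{satisfies every labelled constraint on the branch}, proved by showing for each rule that satisfaction of the conclusions of the explored branch implies satisfaction of the premise. Under that invariant the $\triangle$ cases are immediate: the left branches ($i\geq1$ or $i\leq0$) make the premise constraint vacuous, and the right branches give $v_1(\psi)\geq1$ or $v_1(\psi)<1$, hence $v_1(\triangle\psi)\in\{0,1\}$ satisfies the premise bound --- so the ``exactness'' obstacle you anticipate simply does not arise.
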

\begin{theorem}\label{theorem:npcompleteness}
Satisfiability of $\LukProbsquare$ and $\fourLukProb$ is $\np$-complete.
\end{theorem}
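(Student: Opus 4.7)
The plan is to establish $\np$-hardness by a direct reduction from $\Luk_\triangle$-satisfiability and $\np$-membership by non-deterministically guessing both a branch of $\mathcal{T}(\Luk^2_\triangle)$ and a polynomially-sized set of ``witness'' $\BD$-valuations whose sufficiency follows from a Carath\'eodory-style vertex argument, in the spirit of~\cite{HajekTulipani2001}.

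For the lower bound, I would reduce from $\Luk_\triangle$-satisfiability, which is already $\np$-hard. Given $\phi\in\LLuk$ with propositional variables $p_1,\dots,p_k$, pick fresh $q_1,\dots,q_k\in\Prop$ and let $\phi^*$ be obtained from $\phi$ by replacing each $p_i$ with $\purebelmod q_i$ (resp.\ $\Prob q_i$ for $\LukProbsquare$). Any valuation $v$ witnessing $\Luk_\triangle$-satisfiability of $\phi$ is realised by a $\fourLukProb$ model whose underlying $\BD$ model contains, for each $i$, a ``pure-truth'' state for $q_i$ carrying mass $v(p_i)$, so that $\mu_\four(|q_i|^\purebel)=v(p_i)$; the converse is immediate since the values of the modal atoms $\purebelmod q_i$ can independently range over $[0,1]$.

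For the upper bound, by Theorems~\ref{theorem:embeddings1}--\ref{theorem:embeddings2} it suffices to handle $\fourLukProb$. Given $\alpha\in\LfourLukProb$, non-deterministically guess an open branch $\mathcal{B}$ of $\mathcal{T}(\Luk^2_\triangle)$ starting from $\{\alpha\geqslant_1 1\}$. Since each rule strictly reduces the complexity of the decomposed subformula while adding only boundedly many labelled formulas and at most one fresh numerical variable, $\mathcal{B}$ has polynomial length and yields a polynomial-size linear system $\Sigma$ over variables $x_\psi^L$ for $\psi\in\Sf(\alpha)$; in particular $\Sigma$ pins down candidate values $x_{\mathsf{X}\phi}^L$ for every modal atom $\mathsf{X}\phi\in\Sf(\alpha)$.

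To enforce that these candidates arise from a bona fide $\four$-probability, I would augment $\Sigma$ with non-negative variables $y_v$ indexed by $\BD$-valuations $v\colon\Prop(\alpha)\to\{\true,\both,\neither,\false\}$, together with $\sum_v y_v=1$ and, for every modal atom $\mathsf{X}\phi\in\Sf(\alpha)$, the equation $x_{\mathsf{X}\phi}^L=\sum\{y_v\mid v\text{ assigns }\phi\text{ the four-valued value encoded by }\mathsf{X}\}$. Although the index set of $v$'s has size $4^{|\Prop(\alpha)|}$, the augmented system has only polynomially many equality constraints, so any feasible solution admits a vertex solution with support on polynomially many $y_v$'s; the algorithm non-deterministically guesses this support, restricts the linear program accordingly, and checks feasibility in polynomial time. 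Remark~\ref{rem:classicality} guarantees that any feasible LP witness corresponds to a genuine $\fourLukProb$ model. The main obstacle will be the Carath\'eodory/vertex argument used to cap the number of relevant witnesses uniformly across all branches, together with efficient certification that each guessed $v$ contributes to the correct modal-atom equations without enumerating all $4^{|\Prop(\alpha)|}$ valuations; both reduce to polynomial-time evaluation of $\BD$ subformulas on four-valued inputs, so the overall procedure runs in $\np$.
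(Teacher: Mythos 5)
Your proposal is correct and follows essentially the same route as the paper: reduce via the constraint tableau to a polynomial-size linear system, add exponentially many valuation-mass variables encoding coherence of the modal atoms, and invoke the small-support (Carath\'eodory/vertex) lemma of Fagin--Halpern--Megiddo to guess a polynomial-size witness and check a polynomial LP. The only cosmetic differences are that the paper handles $\LukProbsquare$ directly (two-valued witness valuations over a doubled variable set after eliminating $\neg$) rather than $\fourLukProb$ with four-valued valuations, and that it merely asserts the hardness direction, which you spell out as an explicit reduction from $\Luk_\triangle$-satisfiability.
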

\begin{proof}
Recall that $\LukProbsquare$ and $\fourLukProb$ can be linearly embedded into one another (Theorems~\ref{theorem:embeddings1} and~\ref{theorem:embeddings2}). Thus, it remains to provide a non-deterministic polynomial algorithm for one of these logics. We choose $\LukProbsquare$ since it has only one modality.

Let $\alpha\in\LLukProbsquare$. We can w.l.o.g.\ assume that $\neg$ occurs only in modal atoms and that in every modal atom $\Prob\phi_i$, $\phi_i$ is in negation normal form. Define $\alpha^*$ to be the result of the substitution of every $\neg p$ occurring in $\alpha$ with a new variable~$p^*$. It is easy to check that $\alpha$ is satisfiable iff $\alpha^*$ is. We construct a~satisfying valuation for $\alpha^*$.

First, we replace every modal atom $\Prob\phi_i$ with a fresh variable $q_{\phi_i}$. Denote the new formula $(\alpha^*)^-$. It is clear that the size of $(\alpha^*)^-$ ($|(\alpha^*)^-|$) is only linearly greater than $|\alpha|$. We construct a tableau beginning with $\{(\alpha^*)^-\geqslant_1c,c\geq1\}$. This gives us an instance of the MIP equivalent to the $\Luk$-satisfiability of $(\alpha^*)^-$ (cf.~\cite{Haehnle1992,Haehnle1994,Haehnle1999} for more details). Now, write $z_i$ for the values of $q_{\phi_i}$'s in $(\alpha^*)^-$. Our instance of the MIP also has additional variables $x_j$ ranging over $[0,1]$ as well as equalities $k=1$ and $k'=0$ obtained from entries $k\geq1$ and $k'\leq0$. It is clear that both the number of (in)equalities $l_1$ and the number of variables $l_2$ in the MIP are linear w.r.t.\ $|(\alpha^*)^-|$. Denote this instance MIP(1).

We need to show that $z_i$'s are coherent as probabilities of $\phi_i$'s (here, $i\leq n$ indexes the modal atoms of $(\alpha^*)^-$). We introduce $2^n$ variables $u_v$ indexed by $n$-letter words over $\{0,1\}$ and denoting whether the variables of $\phi_i$'s are true under $v^+$.\footnote{Note that $\neg$ does not occur in $(\alpha^*)^-$ and thus we care only about $e_1$ and $v^+$
. Furthermore, while $n$ is the number of $\phi_i$'s, we can add superfluous modal atoms or variables to make it also the number of variables.} We let $a_{i,v}=1$ when $\phi_i$ is true under $v^+$ and $a_{i,v}=0$ otherwise. Now add new equalities denoted with $\mathrm{MIP}(2\exp)$ to MIP(1), namely, $\sum_{v}u_v=1$ and $\sum_{v}(a_{i,v}\cdot u_v)=z_i$. It is clear that the new MIP ($\mathrm{MIP}(1)\cup\mathrm{MIP}(2\exp)$) has a non-negative solution iff $\alpha$ is satisfiable. Furthermore, although there are $l_2+2^n+n$ variables in $\mathrm{MIP}(1)\cup\mathrm{MIP}(2\exp)$, it has no more than $l_1+n+1$ (in)equalities. Thus by~\cite[Lemma~2.5]{FaginHalpernMegiddo1990}, it has a~non-negative solution with at most $l_1+n+1$ non-zero entries. We guess a list $L$ of at most $l_1+n+1$ words $v$ (its size is $n\cdot(l_1+n+1)$). We can now compute the values of $a_{i,v}$'s for $i\leq n$ and $v\in L$ and obtain a new MIP which we denote $\mathrm{MIP}(2\mathrm{poly})$: $\sum_{v\in L}u_v=1$ and $\sum_{v\in L}(a_{i,v}\cdot u_v)=z_i$. It is clear that $\mathrm{MIP}(1)\cup\mathrm{MIP}(2\mathrm{poly})$ is of polynomial size w.r.t.\ $|\alpha|$ and has a non-negative solution iff $\alpha$ is satisfiable. Thus, we can solve it in non-deterministic polynomial time as required.
\end{proof}
\section{Conclusion\label{sec:conclusion}}
We presented logic $\fourLukProb$ formalising four-valued probabilities proposed in~\cite{KleinMajerRad2021} using a two-layered expansion of \L{}ukasiewicz logic with~$\triangle$. We established faithful embeddings between $\fourLukProb$ and $\LukProbsquare$, the logic of $\pm$-probabilities~\cite{BilkovaFrittellaKozhemiachenkoMajerNazari2022arxiv}. Moreover, we constructed a sound and complete axiomatisation of $\fourLukProb$ and proved its decidability using constraint tableaux for $\Luk_\triangle$.

Several questions remain open. 
In~\cite{BilkovaFrittellaKozhemiachenkoMajerNazari2022arxiv}, we presented two-layered logics for reasoning with belief and plausibility functions. These logics employ a~‘two-valued’ interpretation of belief and plausibility (i.e., $\phi$ has two belief assignments: for $\phi$ and for $\neg\phi$). It would be instructive to axiomatise ‘four-valued’ belief and plausibility functions and formalise reasoning with those via a two-layered logic.

Moreover, we have been considering logics whose inner layer lacks implication. It is, however, reasonable to assume that an agent can assign certainty to conditional statements. Furthermore, there are expansions of $\BD$ with truth-functional implications (cf.~\cite{OmoriWansing2017} for examples). A natural next step now is to axiomatise paraconsistent probabilities defined over a logic with an implication.
\bibliographystyle{splncs04}
\bibliography{reference}
\end{document}